\documentclass[12pt]{article}
\usepackage[utf8]{inputenc}
\usepackage{amsmath,amsthm,amssymb}
\usepackage{thmtools}
\usepackage{physics}
\usepackage{mathtools}
\usepackage{mathrsfs}
\usepackage{bbm}

\newtheorem{thm}{Theorem}[section]
\newtheorem{lem}[thm]{Lemma}
\newtheorem{cor}[thm]{Corollary}

\newtheorem{conj}[thm]{Conjecture}

\theoremstyle{definition}
\newtheorem{mydef}[thm]{Definition}
\newtheorem{rem}[thm]{Remark}
\newtheorem{ex}[thm]{Example}

\title{The NIEP is solvable by reality and finitely many polynomial inequalities}
\author{Jared J. L. ~Brannan \and Benjamin J.~Clark}
\date{July 2024}

\begin{document}

\maketitle

\begin{abstract}
The nonnegative inverse eigenvalue problem (NIEP) is shown to be solvable by the reality condition, spectrum equal to its conjugate, as well as by a finite union and intersection of polynomial inequalities. It is also shown that the symmetric NIEP and real NIEP form semi-algebraic sets and can therefore be solved just by a finite union and intersection of polynomial inequalities. An overview of ideas are given in how tools from real algebraic geometry may be applied to the NIEP and related sub-problems.
\end{abstract}

\section{Introduction}

The nonnegative inverse eigenvalue problem (NIEP) asks for necessary and sufficient conditions for a list of complex numbers to be the spectra of a nonnegative matrix, see \cite{johnson2018} and its references for an extensive survey of the topic.

The NIEP has been solved for $n \leq 4$. The solution for $n=4$ was first given by Meehan in \cite{meehan1998} which improved upon on existing necessary conditions for spectral bounds. An alternative solution for $n=4$ was given by Torre-Mayo \cite{torremayo2007} where instead of considering realizable spectra they considered coefficients of realizable polynomials, where a realizable polynomial is defined to be the characteristic polynomial for a nonnegative matrix. 

Motivated by Lowey and London in \cite{loewy1978}, Bharali and Holtz in \cite{bharali2008} searched for more necessary conditions for the NIEP by considering entire functions that preserve nonnegative matrices. In their introduction, they claim, without proof, that the NIEP can be solved using a finite number of polynomial inequalities following the spectra of nonnegative matrices form a semi-algebraic set. We prove that the coefficients of realizable characteristic polynomials form a semi-algebraic set, but the set of realizable spectra do not. However, we show that with the reality condition realizable spectra can still be solved by a finite number of polynomial inequalities.

The above polynomial inequalities form the certificate of what it means to solve the NIEP. Once those polynomial inequalities are derived, a potential spectra or characteristic polynomial can be checked in linear time with respect to the number of polynomials by simply evaluating each of the inequalities. This certificate points to future refinements in Meehan's solutions for $n=4$. 

While the realizable spectra of nonnegative matrices do not form a semi-algebraic set, the solutions to two major subproblems do. We show that both the set of spectra for the real nonnegative inverse eigenvalue problem (RNIEP) and symmetric nonnegative inverse eigenvalue problem (SNIEP) form semi-algebraic sets. 


\section{Background and notation}

The set $\mathsf{M}_n$ is used for $n$ by $n$ real square matrices. The set restrictions $\mathsf{M}_n^{\geq 0}$ ($\mathsf{M}_n^{+})$ will be used to denote the $n$ by $n$ entry-wise nonnegative (positive) matrices.

Define $\mathscr{C}_n \subset \mathbb{R}^n$ as the set of coefficients for realizable characteristic polynomials. We order the tuple $(k_1, \dots, k_n) \in \mathscr{C}_n$ where $k_1$ corresponds with the $x^{n-1}$ term of the characteristic polynomial and $k_n$ with the constant term. The notation $\mathscr{C}^{\text{sym}}_n$ will be coefficients for realizable characteristic polynomials pulled from nonnegative symmetric matrices.

Denote $\mathscr{R}_n \subset \mathbb{C}^n$ to be the set of realizable spectra of $n$ by $n$ nonnegative matrices. Similar to above $\mathscr{R}_n^{\text{sym}}$ will be realizable spectra of symmetric $n$ by $n$ nonnegative matrices and $\mathscr{R}_n^{\text{real}}$ for real spectra of $n$ by $n$ nonnegative matrices. Finally $\mathscr{R}_{n,k}$ is defined to be the set of realizable spectra with $k$ conjugate pairs. 

Most of this paper is devoted to applying tools from real algebraic geometry to the NIEP and related problems. For results from real algebraic geometry we are pulling mainly from \cite{bochnak2013} with some of the complexity analysis pulled from \cite{basu2003}. We will use $R$ to denote a real closed field which is a field with an ordering and no nontrivial real algebraic extensions. A real closed field is a generalization of the field of real numbers, $\mathbb{R}$. For this paper, $\mathbb{R}$ will be the only real closed field considered.

\begin{mydef}
    A \textbf{semi-algebraic set} is a subset of $R^n$  of the form 
    \[
    \bigcup_{i=1}^s \bigcap_{j=1}^{r_i} \{x \in R^n \mid f_{i,j} *_{i,j} 0 \},
    \]
    where $f_{i,j} \in R[X_1, \dots, X_n]$ and $*_{i,j}$ is either $>$, $\geq$, or $=$, for $i=1,\dots, s$ and $j=1,\dots,r_i$.
    
    A \textbf{basic closed semi-algebraic} subset of $R^n$ is a set of the form 
    \[
    \{x \in R^n \mid f_1(x) \geq 0, \dots, f_s(x) \geq 0 \},
    \]
    where $f_1, \dots, f_s \in R[X_1, \dots, X_n]$. 
    
    A \textbf{basic open semi-algebraic} subset of $R^n$ is a set of the form 
    \[
    \{x \in R^n \mid f_1(x) > 0, \dots, f_s(x) > 0 \},
    \]
    where $f_1, \dots, f_s \in R[X_1, \dots, X_n]$.
\end{mydef}

\begin{thm}
    Let $A \subset R^n$ be an open (resp. closed) semi-algebraic set. Then $A$ is a finite union of basic open (resp. basic closed) semi-algebraic sets.
\end{thm}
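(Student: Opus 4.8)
The plan is to reduce the two assertions to a single one by complementation, and then to prove the closed case --- the classical finiteness theorem of real algebraic geometry --- by reducing to closures of cells of a cylindrical algebraic decomposition and inducting on dimension.

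\emph{Reduction to the closed case.} The open and closed statements are complementary. Suppose the closed statement holds and let $A\subseteq R^n$ be open semialgebraic, so that $C:=R^n\setminus A$ is closed semialgebraic. By the closed statement, $C=\bigcup_{i=1}^{s}\{x\in R^n\mid f_{i1}(x)\ge 0,\dots,f_{ir_i}(x)\ge 0\}$ for suitable $f_{ij}\in R[X_1,\dots,X_n]$. Complementing and applying De Morgan,
\[
A=\bigcap_{i=1}^{s}\ \bigcup_{j=1}^{r_i}\ \{x\in R^n\mid -f_{ij}(x)>0\},
\]
and distributing the finite intersection over the finite unions exhibits $A$ as a finite union of sets $\{x\mid -f_{1j_1}(x)>0,\dots,-f_{sj_s}(x)>0\}$, each basic open. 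The identical computation with the roles of ``$\ge$'' and ``$>$'' interchanged deduces the closed statement from the open one. So it is enough to prove that every \emph{closed} semialgebraic $A\subseteq R^n$ is a finite union of basic closed sets.

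\emph{Reduction to closures of cells.} By the Tarski--Seidenberg principle and the cylindrical algebraic decomposition available in \cite{bochnak2013}, $A$ is a Boolean combination of finitely many sets $\{p_k>0\}$, and there is a finite partition of $R^n$ into semialgebraic cells on each of which every $p_k$ has constant sign; $A$ is then the union of the cells it contains. Since $A$ is closed, the closure of any cell contained in $A$ is again contained in $\overline A=A$, so $A$ is the finite union of those closures. It therefore suffices to show that the closure of a single cell is a finite union of basic closed sets; equivalently, writing a cell as a sign set $R(\sigma)=\{x\mid\operatorname{sign}(p_k(x))=\sigma_k\ \text{for all }k\}$, that $\overline{R(\sigma)}$ is a finite union of basic closed sets.

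\emph{The closure of a cell; the induction.} The obvious candidate --- the basic closed set obtained from $\sigma$ by relaxing each strict inequality $p_k>0$ to $p_k\ge 0$ --- contains $\overline{R(\sigma)}$ but is in general strictly larger: the origin lies on the curve $y^2=x^3-x^2$ and satisfies $x^2(x-1)-y^2\ge 0$, yet it is not in $\overline{\{x^2(x-1)-y^2>0\}}=\{x\ge 1,\ x^2(x-1)-y^2\ge 0\}$, being an isolated real point invisible to the relaxed inequalities. The correct description of $\overline{R(\sigma)}$ therefore requires extra polynomials coming from the geometry, and one obtains them by induction on $d=\dim A$: the base case $d=0$ is a finite set of points, each cut out by equalities, and in the inductive step one peels off the top-dimensional part of $A$ (whose closure is describable from the cell structure) and applies the inductive hypothesis to a closed semialgebraic set of dimension $<d$ supported on the lower strata, then reassembles. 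Making this precise --- identifying which lower-dimensional locus must be added or removed, and doing so with finitely many polynomial inequalities --- is where the curve-selection lemma and a \L{}ojasiewicz inequality enter, since these control exactly how a cell's closure attaches along strata of smaller dimension.

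The main obstacle is this last step. The first two reductions are formal manipulations of finite Boolean combinations and of the cell decomposition, but passing from a ``union of cell-closures'' description of a closed semialgebraic set to one by genuinely \emph{basic} closed sets is not formal: as the isolated-point example shows, closures of semialgebraic sets are not detected by weakening strict inequalities, so the proof must invoke the finer structure theory --- cell decomposition, curve selection, \L{}ojasiewicz inequalities --- to supply the missing polynomials and to make the dimension induction close. I would follow the development in \cite{bochnak2013} for those ingredients.
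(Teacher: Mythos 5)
The paper does not actually prove this statement: it is quoted as background, being the Finiteness Theorem of real algebraic geometry (Theorem 2.7.1/2.7.2 of \cite{bochnak2013}), so your attempt can only be measured against the standard argument. Your two reductions are correct. The open and closed cases are interchanged by complementation and De Morgan exactly as you write, and a closed semialgebraic set $A$ is indeed the union of the closures of the sign-condition sets $R(\sigma)$ of its defining polynomials, since $R(\sigma)\subseteq A$ gives $\overline{R(\sigma)}\subseteq\overline{A}=A$. Your example showing that $\overline{R(\sigma)}$ is not obtained by merely relaxing strict inequalities (the isolated point of $y^2=x^2(x-1)$) is also the right cautionary example and correctly identifies why the naive argument fails.

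The gap is that the entire content of the theorem sits in the step you leave open: showing that $\overline{R(\sigma)}$ --- equivalently, an arbitrary closed semialgebraic set --- is a finite union of \emph{basic} closed sets. Your sketch (``peel off the top-dimensional part and induct'') never states the inductive hypothesis precisely, nor does it say where the auxiliary polynomials that cut out the lower-dimensional attaching locus come from; naming curve selection and \L{}ojasiewicz inequalities is not a substitute for producing them. The standard elementary proof in \cite{bochnak2013} runs the induction on the ambient dimension $n$ through the cylindrical structure together with a separation lemma that manufactures the extra polynomials, and there is an alternative proof via compactness of the real spectrum; either would have to be reproduced in full to close your argument. As written, your proposal establishes only the correct but formal reductions, and the appropriate course --- which is what the paper itself does --- is to cite the result from \cite{bochnak2013} rather than claim a proof.
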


Semi-algebraic sets are useful on their own as sets whose boundaries are defined by polynomial equations, but one of their main strengths is their stability through several types of natural operations. Most notably is if you project off a variable of a semi-algebraic set, the resulting set is still semi-algebraic. This result is given below.

\begin{mydef}
    Given a semi-algebraic set 
    \[
    S = \{ (x, y) \in R^n \times R : \Phi(x,y) \}
    \]
    where $\Phi(x,y)$ is some finite union and intersection of polynomial inequalities. The projection, $\pi: R^n \times R \rightarrow R^n$, of $S$ is defined by 
    \[
    \pi(S) = \{ x \in R^n : \exists y ~ \Phi(x,y) \}.
    \]
\end{mydef}

The projection of a variable in a semi-algebraic set moves that variable from a free variable to a quantified variable. The following theorem, originally given by Tarski and Seidenberg in \cite{tarski1998} and in \cite{seidenberg1954}, guarantees that the projected set is still semi-algebraic and the process of removing the quantifier from a system of polynomial inequalities is known as quantifier elimination.

\begin{thm}[Tarski-Seidenberg theorem] \label{thm:projection}
    Let $R$ be a real-closed field, let $\pi:R^{n} \times R^m \to R^n$ be the projection map, and let $S$ be a semi-algebraic set in $R^n \times R^m$. Then $\pi(S)$ is a semi-algebraic set in $R^n$.
\end{thm}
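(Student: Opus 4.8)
The plan is to follow the classical route through the theory of subresultants (equivalently, Sturm--Habicht sequences), reducing the statement to a single combinatorial fact about parametrized univariate polynomials, as in \cite{bochnak2013, basu2003}.

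First I would cut the number of eliminated variables down to one. The map $\pi : R^n \times R^m \to R^n$ factors as a composition of $m$ coordinate projections of the form $R^n \times R^{k} \to R^n \times R^{k-1}$, so it suffices to show that if $T \subseteq R^{N+1}$ is semi-algebraic then its image under the projection $R^{N+1} \to R^{N}$ forgetting the last coordinate is semi-algebraic, and then to iterate. Writing $T$ in the disjunctive normal form supplied by the definition, $T = \bigcup_{i=1}^s T_i$ with each $T_i$ a finite conjunction of conditions $f_{i,j} *_{i,j} 0$, and using $\pi(\bigcup_i T_i) = \bigcup_i \pi(T_i)$, I reduce further to the case where the defining formula $\Phi(x,y)$, with $x \in R^N$ and $y \in R$, is a single conjunction of polynomial equalities and inequalities in $(x,y)$.

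Second, fix the finite list of polynomials $p_1(x,y),\dots,p_k(x,y)$ occurring in $\Phi$ and view each as a polynomial in $y$ with coefficients in $R[X_1,\dots,X_N]$. The technical heart is the following uniformity lemma: there is a finite family $\mathcal{G} \subseteq R[X_1,\dots,X_N]$ --- assembled from the leading coefficients in $y$, the $y$-derivatives, and the principal subresultant coefficients of the $p_i$ and of their pairwise products --- such that over each cell of the finite semi-algebraic partition of $R^N$ obtained by fixing the sign ($<0$, $=0$, or $>0$) of every $g \in \mathcal{G}$, the number of distinct real roots of each $p_i(x,\cdot)$, the relative order of all these roots on the $y$-line, and the sign of each $p_i(x,\cdot)$ on every resulting open interval (and at every root) are all constant. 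Granting this, the truth of $\exists y\, \Phi(x,y)$ depends only on the cell containing $x$; hence $\pi(T)$ is the union of those finitely many cells that admit a satisfying $y$, and is therefore semi-algebraic. Iterating over the $m$ original quantified variables finishes the proof.

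The step I expect to be the main obstacle is exactly this uniformity lemma: controlling how the real-root structure of a univariate polynomial changes as its coefficients vary over a semi-algebraic parameter set, uniformly and through only finitely many polynomial sign conditions on the parameters. This is precisely what subresultant theory delivers: the signed remainder (Sturm) sequence of $p$ and $p'$, or the Sylvester--Habicht sequence, has entries whose coefficients are polynomials in the parameters up to controlled powers of leading coefficients, and the Tarski query / Sturm count expresses the number of real roots --- and the number of real roots at which a further polynomial has a prescribed sign --- in terms of the signs of those coefficient polynomials. The one genuine subtlety is degree drop: the leading coefficient of some $p_i(x,\cdot)$ in $y$ may vanish on part of parameter space, so one first case-splits on which leading coefficients vanish and applies the lemma on each resulting lower-degree stratum. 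A fully constructive repackaging of the same ideas is cylindrical algebraic decomposition, which additionally yields the complexity estimates invoked later in the paper.
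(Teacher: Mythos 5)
The paper does not actually prove this theorem: it is stated as a classical result and attributed to Tarski and Seidenberg, with the reader referred to the cited literature, so there is no in-paper proof to compare against. Your sketch is the standard argument from exactly those sources (Bochnak--Coste--Roy; Basu--Pollack--Roy): reduce to eliminating one variable at a time, pass projection through the finite union to reduce to a single conjunction, and then invoke a sign-invariant decomposition of the parameter space built from subresultant/Sturm--Habicht data so that the truth of $\exists y\,\Phi(x,y)$ is constant on each cell. The reductions you perform are all correct, and you correctly flag the degree-drop issue and the need to stratify by which leading coefficients vanish.

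The one thing to be clear-eyed about is that your ``uniformity lemma'' is not a lemma you reduce the theorem to --- it \emph{is} the theorem, in the sense that essentially all of the mathematical content lives there, and your proposal describes the tool that proves it (the Tarski query computed from signed subresultant coefficient signs) without carrying out the argument. In particular, the claim that finitely many polynomial sign conditions on the parameters determine the number of real roots, their interleaving, and the signs of the $p_i$ on the complementary intervals requires the full machinery of Sturm--Habicht sequences together with a specialization property (that the subresultant sequence of the specialized polynomials agrees, up to the controlled powers of leading coefficients you mention, with the specialization of the generic subresultant sequence). So as a proof \emph{plan} this is accurate and matches the literature the paper leans on; as a proof it is incomplete at precisely the step you identify, and completing it would amount to reproducing several sections of the cited texts.
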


The unions of semi-algebraic sets make them generally unbounded, but always finite, in terms of the maximum number of polynomial inequalities that are needed to define them. However, if we consider basic semi-algebraic sets then the bound becomes sharp within a given dimension.

\begin{thm}[Theorem 6.5.1 from \cite{bochnak2013}] \label{thm:open_max_inequal}
    Let $V$ be an algebraic subset of $R^n$ of dimension of $d > 0$. Then every basic open semi-algebraic subset $U$ of $V$ can be defined by $d$ simultaneous strict polynomial inequalities: there exists $f_1, f_2, \dots, f_d \in R[t]$ such that 
    \[
    U = \{ x \in V \mid f_1(x) > 0, \dots, f_d(x) > 0 \}.
    \]
\end{thm}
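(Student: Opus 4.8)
The plan is to recognize this as the Br\"ocker--Scheiderer bound on the number of strict inequalities and to obtain it from the real spectrum and the theory of fans. First I would transfer the problem to the real spectrum $\operatorname{Sper}(R[V])$ of the coordinate ring $R[V]=R[X_1,\dots,X_n]/I(V)$: under the canonical correspondence $S\mapsto\widetilde S$ between semi-algebraic subsets of $V$ and constructible subsets of $\operatorname{Sper}(R[V])$ --- which is a Boolean homomorphism sending $\{g>0\}$ to the set of orderings making $g$ positive --- the set $U$ goes to a basic open constructible set $\widetilde U$, and conversely any presentation $\widetilde U=\{\,g_1>0,\dots,g_s>0\,\}$ pulls back verbatim, by intersecting with the closed points, to $U=\{\,x\in V:g_1(x)>0,\dots,g_s(x)>0\,\}$. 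On each irreducible component $\operatorname{trdeg}_R R(V)=\dim V=d$, and the stability machinery below applies directly to the (possibly non-domain) ring $R[V]$, so no gluing over components is required. Hence it suffices to bound by $d$ the number of strict inequalities needed to describe any basic open constructible subset of $\operatorname{Sper}(R[V])$.

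The engine is the abstract stability theorem from the theory of spaces of orderings (equivalently, reduced special groups): the least integer $\widetilde s(A)$ of strict inequalities that suffices, uniformly, to present every basic open constructible subset of $\operatorname{Sper}(A)$ is controlled entirely by the \emph{fans} of $\operatorname{Sper}(A)$; concretely, if every finite fan $F\subseteq\operatorname{Sper}(A)$ satisfies $|F|\le 2^{s}$, then $\widetilde s(A)\le s$. Granting this, the whole problem reduces to the purely valuation-theoretic assertion that every finite fan in $\operatorname{Sper}(R[V])$ has at most $2^{d}$ elements.

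That fan estimate is the decisive step. Here I would invoke Br\"ocker's trivialization of fans: a finite fan in a real field $K$ is either small ($|F|\le 2$) or is induced, via a suitable valuation $v$ of $K$, from a fan on the residue field $k_v$; iterating, a fan of cardinality $2^{k}$ forces $\dim_{\mathbb{F}_2}(\Gamma_v/2\Gamma_v)$ plus the rank of a fan on $k_v$ to be at least $k$. Combined with the Abhyankar inequality bounding the rational rank of $\Gamma_v$ plus $\operatorname{trdeg}_R k_v$ by $\operatorname{trdeg}_R K=d$, and an induction on $d$, this gives $k\le d$, i.e.\ $|F|\le 2^{d}$. Feeding the bound back through the stability theorem and then the real-spectrum dictionary yields polynomials $f_1,\dots,f_d$ with $U=\{\,x\in V:f_1(x)>0,\dots,f_d(x)>0\,\}$, as claimed.

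The main obstacle is precisely the content of the last two steps. The stability theorem --- that fan behaviour is the \emph{only} obstruction to compressing a basic open set into few inequalities --- rests on a delicate induction over the combinatorics of special groups, and the fan trivialization together with the dimension bound rests on the structure theory of fans over valued fields; the reductions in the first two paragraphs are essentially bookkeeping once this apparatus is in place. A more self-contained attempt would induct on $d$, disposing of curves ($d=1$) by hand and passing to a residue field via a generic valuation in the inductive step --- but making that induction close without the fan formalism is exactly where the difficulty re-emerges, so I would not expect to sidestep the machinery.
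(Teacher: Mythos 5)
The paper offers no proof of this statement: it is imported verbatim as Theorem 6.5.1 of \cite{bochnak2013}, so there is no in-paper argument to measure yours against. On its own terms, your outline reproduces the standard architecture of Br\"ocker's theorem as developed in that reference: transfer $U$ to a basic open constructible subset of $\operatorname{Sper}(R[V])$ via the tilde correspondence, reduce the question to bounding the stability index, control the stability index by the maximal cardinality $2^k$ of a finite fan, and prove $k\le d$ by Br\"ocker's valuation-theoretic trivialization of fans combined with the Abhyankar-type inequality on $\operatorname{trdeg}$. That is the right road map, and you are candid that the two load-bearing steps (the abstract stability theorem for spaces of orderings and the fan trivialization) are invoked as black boxes, so what you have is an accurate citation trail rather than a self-contained proof --- which is all the paper itself does, less explicitly. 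Two points to tighten if you were to write it out: the fan machinery lives over \emph{fields}, so for reducible $V$ you must pass to the residue fields at the supports of points of $\operatorname{Sper}(R[V])$, where you only get $\operatorname{trdeg}_R\le d$ component by component (equality can fail for lower-dimensional components, but the inequality suffices), so ``no gluing over components is required'' understates a genuine reduction step; and the degenerate cases $U=\emptyset$ and $U=V$, and more generally presentations with fewer than $d$ inequalities, need harmless padding by constants to land on exactly $d$.
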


\begin{thm}[Theorem 10.4.8 from \cite{bochnak2013}] \label{thm:closed_max_inequal}
    Let $V$ be an algebraic subset of $R^n$ of dimension of $d > 0$ and $T$ be a basic closed semi-algebraic subset of $V$. Then there exists $d(d+1)/2$ polynomial functions $f_1, \dots, f_{d(d+1)/2}$ on $V$ such that 
    \[
    T = \{x \in V | f_1(x) \geq 0, \dots, f_{d(d+1)/2}(x) \geq 0\}.
    \]
\end{thm}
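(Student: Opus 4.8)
The plan is to pass from $V$ to the real spectrum $\mathrm{Sper}(A)$ of its coordinate ring $A = R[V]$. By the correspondence between semi-algebraic sets and constructible subsets of the real spectrum (which is compatible with Theorem~\ref{thm:projection}), a semi-algebraic subset of $V$ is basic closed and definable by $N$ non-strict inequalities exactly when the associated constructible set is; so it suffices to bound the invariant $\bar t(V)$, the least $N$ for which every basic closed constructible subset of $\mathrm{Sper}(A)$ is cut out by $N$ inequalities $f_1 \ge 0, \dots, f_N \ge 0$. Two ingredients will do the work: the open-case bound $\bar s(V) \le d$ already in hand from Theorem~\ref{thm:open_max_inequal}, and a local--global principle that converts the question into finite combinatorics on fans.

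The second step is to invoke the local--global principle for basic closed sets (in the form developed in \cite{bochnak2013}): a closed constructible $C \subseteq \mathrm{Sper}(A)$ is basic closed and definable by $N$ inequalities if and only if, for every fan $F \subseteq \mathrm{Sper}(A)$, the trace $C \cap F$ is so definable inside $F$. Because $\dim V = d$, every fan has at most $2^d$ points --- this is precisely where the dimension hypothesis enters, through the stability index of $A$, which in turn is governed by the transcendence degrees of residue fields along chains of specializations. One is then left with a finite problem: in a fan with $2^k$ points, $k \le d$, the trace of a basic closed set can always be described by $\binom{k+1}{2}$ inequalities --- intuitively, the $2^k$ sign patterns split into $k$ ``levels'' along the specialization order, and one spends $j$ inequalities at level $j$, for a total of $1 + 2 + \dots + k$. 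Taking the supremum over all fans gives $\bar t(V) \le \binom{d+1}{2} = d(d+1)/2$, as claimed.

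It is illuminating that the bound $d(d+1)/2 = d + (d-1) + \dots + 1$ can also be read off a parallel induction on $\dim V$: given a basic closed $T \subseteq V$, one traps the frontier where $T$ differs from the closure of its $V$-interior inside a proper algebraic subset $V' \subseteq V$ with $\dim V' \le d-1$, describes the regularly closed top-dimensional part by $d$ inequalities (this uses Theorem~\ref{thm:open_max_inequal}, after replacing the basic open interior by its closure with a careful choice of the $d$ defining functions), handles $T \cap V'$ by induction with $(d-1)d/2$ inequalities, and finally recombines the two into a single intersection. I expect the recombination --- and, on the spectral side, the local--global principle itself, whose hard core is the one-dimensional case and the passage to fans --- to be the main obstacle: a naive splice of the top-dimensional and lower-dimensional data would overshoot $d(d+1)/2$, and keeping the total at the triangular number is exactly the content of the Br\"ocker--Scheiderer theory.
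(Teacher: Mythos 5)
The paper does not prove this statement at all: it is imported verbatim as Theorem 10.4.8 of \cite{bochnak2013}, so there is no in-paper argument to compare yours against. Judged on its own terms, your proposal correctly identifies the architecture of the actual proof in the literature (Br\"ocker, Scheiderer, Marshall; Chapters 6, 7 and 10 of \cite{bochnak2013}): pass to the real spectrum, reduce basicness and the count of inequalities to conditions on finite fans via a local--global principle, bound the size of fans by $2^{\dim V}$ through the stability index, and extract the triangular number $d(d+1)/2$ either from combinatorics on fans or from an induction on dimension that spends $d$ inequalities on the regularly closed top-dimensional part (via Theorem~\ref{thm:open_max_inequal}) and $(d-1)d/2$ on a lower-dimensional residue. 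That is the right map of the territory.

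As a proof, however, it has genuine gaps, which you partly acknowledge. The quantitative local--global principle for basic \emph{closed} sets is itself one of the deepest theorems in the subject; asserting it is not proving it, and the closed case is not obtained by formally relaxing the strict inequalities of Theorem~\ref{thm:open_max_inequal}. More concretely, your per-fan count is not right as stated: within a single fan with $2^k$ elements, the traces of basic sets are characterized by index/congruence conditions, and the bound $\binom{k+1}{2}$ does not arise from ``spending $j$ inequalities at level $j$ of the specialization order'' inside one fan. In the standard treatments the triangular number comes from the reduction $\bar t \le \bar s(\bar s+1)/2$ at the level of spaces of orderings, or equivalently from the dimension induction you sketch second --- and there the recombination step, i.e.\ keeping the total at $d+(d-1)+\dots+1$ rather than $d$ plus a full fresh count on the lower-dimensional piece, is precisely the content you have not supplied. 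Since the paper itself treats this theorem as a black box, your outline is a reasonable gloss on where the result comes from, but it should be offered as a citation with commentary rather than as a proof.
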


To use semi-algebraic sets on the NIEP we need some well known results from matrix analysis. The following two definitions and following two remarks are pulled from \cite{horn_2012}.

\begin{mydef}
    Let $A \in \mathsf{M}_n$. The sum of its principal minors of size $k$ is denoted by $E_k(A)$.
\end{mydef}

\begin{mydef}
    The $k$th elementary symmetric function of $n$ complex numbers $\lambda_1, \dots, \lambda_n$, $k \leq n$, is
    \[
    S_k(\lambda_1, \dots, \lambda_n) = \sum_{1 \leq i_1 < \dots < i_k \leq n} \prod_{j=1}^k \lambda_{i_j}.
    \]
    For $A \in \mathsf{M}_n$ with eigenvalues $\lambda_1, \dots, \lambda_n$ we denote $S_k(A) = S_k(\lambda_1, \dots, \lambda_n)$.
\end{mydef}

\begin{rem} \label{rem:char_poly}
    Let $A \in \mathsf{M}_n$ have characteristic polynomial $p_A$, then 
    \[
    p_A(t) = t^n - S_1(A) t^{n-1} + \dots + (-1)^{n-1} S_{n-1}(A) t + (-1)^{n} S_{n}(A).
    \]
\end{rem}

\begin{rem} \label{rem:sym_ele}
    Let $A \in \mathsf{M}_n$. Then $S_k(A) = E_k(A)$ for each $k = 1, \dots, n$.
\end{rem}

Finally, we bring in some known necessary conditions for the NIEP. Let $\Lambda \in \mathbb{C}^n$ be a list of eigenvalues, then the $k$th moment of $\Lambda$ is
\[
s_k(\Lambda) = \sum_{j=1}^n \lambda_j^k.
\]

From \cite{johnson2018} some of the necessary conditions for the NIEP are
\begin{align*}
    0 &\leq s_1(\Lambda) \tag{Trace} \\
    (s_k(\Lambda))^m &\leq n^{m-1} s_{km}(\Lambda), ~ k,m = 1, 2, \dots \tag{JLL} \\
    \Lambda &= \overline{\Lambda} \tag{Reality} \\
    |\lambda_k| &\leq \lambda_1, ~ k=2,\dots, n, \tag{Perron} \\
\end{align*}

Notice that the Trace and JLL conditions are polynomial inequalities while reality and Perron are not. In particular, it is known that you can't determine complex conjugacy of a list through polynomial inequalities. This makes the reality condition unique as we will show below.

\section{The NIEP is solvable by finitely many polynomial inequalities}

Torre-Mayo et. al. in \cite{torremayo2007} considered an alternative posing of the NIEP which is ``Given $k_1,k_2, \dots, k_n$ real numbers, find necessary and sufficient conditions for the existence of a nonnegative matrix of order $n$ with characteristic polynomial $x^n + k_1 x^{n-1} + \dots + k_n$". A given list which is the coefficients of the characteristic polynomial of a nonnegative matrix is called realizable.

The outline to the following theorem comes from the introduction in \cite{bharali2008}. We formalize it here and give an explicit proof.

\begin{lem} \label{lem:realizable_coff}
    The set of realizable coefficients forms a semi-algebraic set.
\end{lem}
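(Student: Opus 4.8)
The plan is to realize $\mathscr{C}_n$ as the projection of a semi-algebraic set living in the space of matrix entries, and then invoke the Tarski–Seidenberg theorem (Theorem~\ref{thm:projection}). First I would work in $R^{n^2}$ with coordinates $(a_{ij})_{1\le i,j\le n}$ representing the entries of a matrix $A \in \mathsf{M}_n$, together with the $n$ extra coordinates $(k_1,\dots,k_n)$ for the putative polynomial coefficients, so the ambient space is $R^{n^2+n}$. The constraint $A \in \mathsf{M}_n^{\geq 0}$ is the basic closed semi-algebraic condition $\{a_{ij} \geq 0 : 1\le i,j\le n\}$. The constraint that $x^n + k_1 x^{n-1} + \dots + k_n$ is the characteristic polynomial of $A$ is, by Remark~\ref{rem:char_poly} and Remark~\ref{rem:sym_ele}, the system of polynomial equations $k_\ell = (-1)^\ell E_\ell(A)$ for $\ell = 1,\dots,n$; each $E_\ell(A)$ is a polynomial in the entries $a_{ij}$ (a sum of $\ell\times\ell$ principal minors), so each of these is an honest polynomial equation in $R[a_{ij}, k_\ell]$.

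Next I would assemble these into one semi-algebraic set
\[
S = \left\{ (a_{ij}, k_\ell) \in R^{n^2+n} \;:\; a_{ij} \geq 0 \ \forall i,j,\ \ k_\ell - (-1)^\ell E_\ell(A) = 0 \ \forall \ell \right\},
\]
which is a finite intersection of polynomial inequalities and equations, hence semi-algebraic by definition. The set of realizable coefficients is exactly the image of $S$ under the projection $\pi : R^{n^2+n} \to R^n$ forgetting the $a_{ij}$ and keeping the $k_\ell$: a tuple $(k_1,\dots,k_n)$ lies in $\mathscr{C}_n$ iff there exists a nonnegative $A$ whose characteristic polynomial has those coefficients, i.e.\ iff $(k_1,\dots,k_n) \in \pi(S)$. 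Applying Theorem~\ref{thm:projection} (with the roles of $n$ and $m$ swapped so that we project off the $n^2$ matrix-entry variables) gives that $\pi(S) = \mathscr{C}_n$ is semi-algebraic, which is the claim.

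There is essentially no deep obstacle here; the content is in setting up the correct semi-algebraic description and correctly identifying $\mathscr{C}_n$ as a projection. The one point requiring a little care is verifying that the characteristic-polynomial condition really is polynomial in the matrix entries — this is where Remarks~\ref{rem:char_poly} and~\ref{rem:sym_ele} do the work, since they let us replace the eigenvalue-side elementary symmetric functions $S_\ell(A)$ (a priori only defined via the spectrum) by the principal-minor sums $E_\ell(A)$, which are manifestly polynomials in the $a_{ij}$. I would also remark that the same argument applies verbatim to $\mathscr{C}^{\text{sym}}_n$ by additionally imposing the linear equations $a_{ij} = a_{ji}$, which are polynomial and preserve semi-algebraicity, so that set is semi-algebraic as well.
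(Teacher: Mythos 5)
Your proposal is correct and is essentially the paper's own argument: embed the coefficients together with the matrix entries in $R^{n^2+n}$, cut out the semi-algebraic set by the nonnegativity inequalities $a_{ij}\geq 0$ and the polynomial equations relating the $k_\ell$ to the principal-minor sums $E_\ell(A)$, and project off the matrix entries via Theorem~\ref{thm:projection}. Your version is in fact slightly more careful than the paper's, which omits the signs $(-1)^\ell$ in the equations $k_\ell = (-1)^\ell E_\ell(A)$ (immaterial for semi-algebraicity), and your closing remark about $\mathscr{C}^{\text{sym}}_n$ is a correct bonus observation.
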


\begin{proof} 
    From remarks \ref{rem:char_poly} and \ref{rem:sym_ele}, the coefficients of the characteristic polynomial are determined by the sums of the principal minors. With this we can construct the non-projected semi-algebraic set for the coefficient form of the NIEP as 
    \[
    \{(k_1, \dots, k_n, A) \in \mathbb{R}^n \times \mathbb{R}^{n^2} : E_1(A) = k_1, \dots, E_n(A) = k_n, A \geq 0 \}.
    \]
    Now we project off $A$ using 
    Theorem \ref{thm:projection} to get the set 
    \[
    \{(k_1, \dots, k_n) \in \mathbb{R}^n : \exists A \in \mathbb{R}^{n^2} ~ E_1(A) = k_1, \dots, E_n(A) = k_n, A \geq 0 \}.
    \]
    By Theorem \ref{thm:projection} this set is semi-algebraic and this set is exactly the set of realizable coefficients.
\end{proof}

Using Remark \ref{rem:char_poly} and Lemma \ref{lem:realizable_coff} we want to now say that the realizable spectra for $n$ by $n$ nonnegative matrices define a semi-algebraic set. However, semi-algebraic sets are only defined over real closed fields. This gives the following theorem and corollary.

\begin{thm} \label{thm:rniep_pol}
    The set of realizable spectra of $n$ by $n$ nonnegative matrices with real spectra forms a semi-algebraic set.
\end{thm}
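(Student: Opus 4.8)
The plan is to imitate the proof of Lemma~\ref{lem:realizable_coff}, replacing the free coefficient variables by the elementary symmetric functions of the prescribed real eigenvalues. The key observation is that for a list of real numbers $(\lambda_1,\dots,\lambda_n)$, a real matrix $A$ has characteristic polynomial $\prod_{i=1}^n(t-\lambda_i)$ if and only if $E_k(A)=S_k(\lambda_1,\dots,\lambda_n)$ for every $k=1,\dots,n$ (combine Remarks~\ref{rem:char_poly} and~\ref{rem:sym_ele}); and when this holds the spectrum of $A$, counted with multiplicity, is exactly the multiset $\{\lambda_1,\dots,\lambda_n\}$, which is entirely real. Conversely, any nonnegative $A$ whose spectrum is $\{\lambda_1,\dots,\lambda_n\}$ satisfies these $n$ equalities. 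Hence the condition ``$(\lambda_1,\dots,\lambda_n)$ is the spectrum of some nonnegative matrix with real spectrum'' is precisely ``$\exists\, A\ge 0$ with $E_k(A)=S_k(\lambda)$ for all $k$''.

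First I would write down the non-projected set
\[
S=\{(\lambda_1,\dots,\lambda_n,A)\in\mathbb{R}^n\times\mathbb{R}^{n^2} : E_1(A)=S_1(\lambda),\dots,E_n(A)=S_n(\lambda),\ A\ge 0\}.
\]
Each $E_k(A)$ is a polynomial in the $n^2$ entries of $A$, each $S_k(\lambda)$ is a polynomial in $\lambda_1,\dots,\lambda_n$, and $A\ge 0$ is $n^2$ nonstrict polynomial inequalities, so $S$ is a (basic closed) semi-algebraic subset of $\mathbb{R}^n\times\mathbb{R}^{n^2}$. Then I would apply Theorem~\ref{thm:projection} to the projection $\pi$ deleting the $A$-coordinates: $\pi(S)$ is semi-algebraic, and by the equivalence above $\pi(S)=\mathscr{R}_n^{\text{real}}$ as a subset of $\mathbb{R}^n$ (viewing it inside $\mathbb{C}^n$ one just adjoins the equations ``imaginary part $=0$'', which preserves semi-algebraicity). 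Note $\pi(S)$ is automatically invariant under permuting the $\lambda_i$, matching the fact that $\mathscr{R}_n^{\text{real}}$ is permutation-symmetric.

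A cheaper variant avoids quantifier elimination entirely: since $\prod_{i=1}^n(t-\lambda_i)=t^n+\sum_{k=1}^n(-1)^kS_k(\lambda)t^{n-k}$, the set $\mathscr{R}_n^{\text{real}}$ is exactly the preimage of the semi-algebraic set $\mathscr{C}_n$ of Lemma~\ref{lem:realizable_coff} under the polynomial map $\lambda\mapsto\big((-1)^1S_1(\lambda),\dots,(-1)^nS_n(\lambda)\big)$ (the signs only serve to match the coefficient ordering fixed for $\mathscr{C}_n$). Preimages of semi-algebraic sets under polynomial maps are semi-algebraic, by substituting the map into the defining Boolean combination of polynomial inequalities, so $\mathscr{R}_n^{\text{real}}$ is semi-algebraic.

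There is no serious obstacle here; the points requiring care are (i) verifying that matching all $n$ elementary symmetric functions genuinely pins down the characteristic polynomial, hence the spectrum with multiplicity, so that $\pi(S)$ is neither larger nor smaller than $\mathscr{R}_n^{\text{real}}$, and (ii) being explicit that reality of the $\lambda_i$ is used essentially: the coordinates $\lambda_1,\dots,\lambda_n$ are honest elements of the real closed field $\mathbb{R}$, so the ambient space is $\mathbb{R}^n$ and the substitution $k_j=(-1)^jS_j(\lambda)$ stays polynomial over $\mathbb{R}$. For genuinely complex spectra this step fails, which is exactly the obstruction analyzed in the remainder of the paper.
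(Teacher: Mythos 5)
Your proposal is correct and follows essentially the same route as the paper: substitute $k_j = S_j(\lambda)$ into the defining formula for $\mathscr{C}_n$ and invoke Theorem~\ref{thm:projection}, with your ``preimage under a polynomial map'' variant being just a repackaging of that same substitution. You are somewhat more careful than the paper in checking that matching all $n$ elementary symmetric functions pins down the characteristic polynomial, so that the projected set is exactly $\mathscr{R}_n^{\text{real}}$, but the underlying argument is identical.
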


\begin{proof}
    From Lemma \ref{lem:realizable_coff} $\mathscr{C}_n$ forms the semi-algebraic set 
    \[
    \{(k_1, \dots, k_n) \in \mathbb{R}^n : \exists A \in \mathbb{R}^{n^2} ~ E_1(A) = k_1, \dots, E_n(A) = k_n, A \geq 0 \}.
    \]
    By Remark \ref{rem:sym_ele} the coefficients of the characteristic polynomial are also described by the elementary symmetric functions applied to the spectra. Thus we can substitute $k_i$ for $S_i(\sigma)$ for each $i = 1, \dots, n$. This gives that 
    \[
    \{\sigma \in \mathbb{R}^n : \exists A \in \mathbb{R}^{n^2} ~ E_1(A) = S_1(\sigma), \dots, E_n(A) = S_n(\sigma), A \geq 0 \}.
    \]
    The above set is made from polynomial inequalities and represents realizable real spectra of $n$ by $n$ nonnegative matrices.
\end{proof}

\begin{cor}
    The set of realizable spectra of $n$ by $n$ symmetric nonnegative matrices is a semi-algebraic set.
\end{cor}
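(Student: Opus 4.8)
The plan is to mimic exactly the proof of Theorem~\ref{thm:rniep_pol}, since the symmetric case differs only in which matrices we range over. First I would recall from Lemma~\ref{lem:realizable_coff} that $\mathscr{C}_n^{\text{sym}}$, the realizable coefficients coming from symmetric nonnegative matrices, is semi-algebraic: one starts from the set
\[
\{(k_1,\dots,k_n,A)\in\mathbb{R}^n\times\mathbb{R}^{n^2}: E_1(A)=k_1,\dots,E_n(A)=k_n,\ A\geq 0,\ A=A^{\mathsf T}\},
\]
noting that symmetry $A=A^{\mathsf T}$ is a conjunction of linear (hence polynomial) equalities $a_{ij}-a_{ji}=0$, so this remains a semi-algebraic description, and then projects off $A$ via Theorem~\ref{thm:projection}.

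The key point I would then invoke is that a symmetric real matrix has only real eigenvalues, so its spectrum $\sigma\in\mathbb{R}^n$ automatically, and by Remark~\ref{rem:char_poly} together with Remark~\ref{rem:sym_ele} the characteristic polynomial coefficients are $(-1)^i$ times the elementary symmetric functions $S_i(\sigma)$. Hence, substituting $k_i = S_i(\sigma)$ into the above, the set
\[
\{\sigma\in\mathbb{R}^n: \exists A\in\mathbb{R}^{n^2}\ E_1(A)=S_1(\sigma),\dots,E_n(A)=S_n(\sigma),\ A\geq 0,\ A=A^{\mathsf T}\}
\]
is cut out by polynomial equalities and inequalities in the variables $\sigma,A$, and after projecting off $A$ (Theorem~\ref{thm:projection}) it is a semi-algebraic subset of $\mathbb{R}^n$. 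Finally I would check that this projected set is precisely $\mathscr{R}_n^{\text{sym}}$: a tuple $\sigma$ lies in it iff there is a symmetric nonnegative $A$ whose characteristic polynomial has the prescribed coefficients, which—since $A$ being symmetric forces its spectrum to be real and determined by those coefficients as a multiset—is exactly the condition that $\sigma$ (up to reordering) is the spectrum of a symmetric nonnegative matrix.

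I do not expect any real obstacle here; the only thing to be careful about is the well-definedness of passing from "coefficients" to "spectrum." Because a list of coefficients determines the multiset of roots but not their order, one should phrase $\mathscr{R}_n^{\text{sym}}$ as a set of unordered tuples (or equivalently close it up under the finitely many coordinate permutations, which preserves semi-algebraicity since each permutation is a polynomial—indeed linear—map and finite unions of semi-algebraic sets are semi-algebraic). With that caveat the corollary follows immediately, and in fact the same argument shows it as a special instance of a general principle: imposing any extra polynomial constraints on the entries of $A$ (here symmetry) keeps the realizable-coefficient set, and hence the realizable-real-spectrum set, semi-algebraic.
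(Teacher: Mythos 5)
Your proof is correct and is essentially the paper's intended argument written out in full: the paper's own proof is a one-line appeal to Theorem~\ref{thm:rniep_pol} together with the remark that symmetric nonnegative matrices form a subset of nonnegative matrices with real spectra. Your explicit step of adding the polynomial equalities $a_{ij}-a_{ji}=0$ to the defining system before projecting off $A$ is precisely the detail needed to make that appeal rigorous, since mere containment in a semi-algebraic set would not by itself imply semi-algebraicity.
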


\begin{proof}
    Follows Theorem \ref{thm:rniep_pol} and from noting that symmetric nonnegative matrices form a subset of nonnegative matrices with real spectra.
\end{proof}

The set of realizable spectra of $n$ by $n$ nonnegative matrices is not a semi-algebraic set since it is defined over the complex numbers which are not a real closed field. We can fix this with the necessary reality NIEP condition and by noting that symmetric polynomials map complex conjugates to real numbers. With this connection we can show that the set of spectra for $n$ by $n$ nonnegative matrices can be described by polynomial inequalities along with the reality condition.

\begin{mydef} \label{def:conj_to_real_map}
    Let $U_k \subset \mathbb{C}^n$ such that the first $n-2k$ entries of the tuple are real and the last $2k$ entries are $k$ conjugate pairs. Then define the bijection $\Phi_k: U_k \rightarrow \mathbb{R}^n$ as 
    \begin{align*}
        \Phi_k(x_1, \dots, x_{n-2k}, y_1 + iz_1, y_1 - iz_1, \dots, y_k + iz_k, y_k - iz_k&) \\
        = (x_1, \dots, x_{n-2k}, y_1, z_1, \dots, y_k, z_k&).
    \end{align*}
\end{mydef}

\begin{mydef}
    For a fixed $n$, define the modified elementary symmetric functions as $S_{j,k}: \mathbb{R}^n \rightarrow \mathbb{R}$ constructed as
    \[
    S_{j,k}(x) = S_j(\phi_k^{-1}(x)).
    \]
\end{mydef}

\begin{ex}
    Let $n = 3$ and $\sigma = (\lambda_1, \lambda_2, \lambda_3) \in \mathbb{R}$, then there are $6$ modified elementary symmetric functions 
    \begin{align*}
        S_{1,0}(\sigma) &= \lambda_1 + \lambda_2 + \lambda_3 \\
        S_{2,0}(\sigma) &= \lambda_1 \lambda_2 + \lambda_1 \lambda_3 + \lambda_2 \lambda_3 \\
        S_{3,0}(\sigma) &= \lambda_1 \lambda_2 \lambda_3 \\
        S_{1,1}(\sigma) &= \lambda_1 + 2\lambda_2 \\
        S_{2,1}(\sigma) &= 2\lambda_1 \lambda_2 + \lambda_2^2 + \lambda_3^2 \\
        S_{3,1}(\sigma) &= \lambda_1\lambda_2^2 + \lambda_1 \lambda_3^2.
    \end{align*}
    Notice that $S_{k,0} = S_k$ and that for $j > 0$, $S_{k,j}$ is no longer a symmetric function.
\end{ex}

\begin{thm}
    Let $\sigma \in \mathbb{C}^n$, then $\sigma$ can be verified realizable by the reality condition and the inclusion into one of $\lfloor (n+1)/2 \rfloor$ semi-algebraic sets.
\end{thm}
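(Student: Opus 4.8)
The plan is to stratify $\mathbb{C}^n$ by the number of conjugate pairs occurring in $\sigma$, transport each stratum into $\mathbb{R}^n$ through the bijection $\Phi_k$ of Definition \ref{def:conj_to_real_map}, and then re-run the argument of Theorem \ref{thm:rniep_pol} with the modified elementary symmetric functions $S_{j,k}$ in place of the ordinary $S_j$. The first step is the reality test: if $\sigma \neq \overline{\sigma}$ then $\prod_{j}(t-\lambda_j)$ does not have real coefficients, so it cannot be the characteristic polynomial of a matrix in $\mathsf{M}_n$, and $\sigma$ is not realizable. Assuming $\sigma = \overline{\sigma}$, after permuting coordinates (which changes neither the underlying multiset nor realizability) we may assume $\sigma \in U_k$, i.e.\ with $n-2k$ real entries followed by $k$ genuine conjugate pairs $y_j \pm i z_j$, for a uniquely determined $k$ with $0 \le k \le \lfloor n/2 \rfloor$.

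Next I would cut down the number of relevant strata using Perron--Frobenius: a realizable $\sigma$ must contain a real entry equal to its spectral radius, so $n - 2k \ge 1$ and hence $k \le \lfloor (n-1)/2 \rfloor$ (for odd $n$ this is automatic, since a real polynomial of odd degree has a real root). Thus any $\sigma$ with more than $\lfloor (n-1)/2 \rfloor$ conjugate pairs is rejected outright, and only the values $k = 0, 1, \dots, \lfloor (n-1)/2 \rfloor$ need be considered --- exactly $\lfloor (n-1)/2 \rfloor + 1 = \lfloor (n+1)/2 \rfloor$ of them.

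For each such $k$ I would set
\[
T_k = \{\, x \in \mathbb{R}^n : \exists A \in \mathbb{R}^{n^2} \ \ E_1(A) = S_{1,k}(x),\ \dots,\ E_n(A) = S_{n,k}(x),\ A \ge 0 \,\},
\]
which is the projection off $A$ of a set carved out by polynomial (in)equalities, hence semi-algebraic by Theorem \ref{thm:projection} --- provided each $S_{j,k}$ is an honest element of $\mathbb{R}[X_1,\dots,X_n]$. That holds because $S_j$ is symmetric and the non-real coordinates of $\Phi_k^{-1}(x)$ occur in conjugate pairs, so $S_j(\Phi_k^{-1}(x))$ is real and, once expanded, is a polynomial in the real and imaginary parts of the entries, i.e.\ in $x$. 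I would then check the equivalence: for $\sigma \in U_k$ satisfying reality, $\sigma$ is realizable iff $\Phi_k(\sigma) \in T_k$. Indeed $\sigma$ is realizable by some $A \ge 0$ iff the characteristic polynomial of $A$ equals $\prod_j (t - \lambda_j)$, iff $E_j(A) = S_j(\sigma)$ for all $j$ by Remarks \ref{rem:char_poly} and \ref{rem:sym_ele}, iff $E_j(A) = S_{j,k}(\Phi_k(\sigma))$ for all $j$ by the definition of the modified symmetric functions and the fact that $\Phi_k$ is a bijection; symmetry of $S_j$ makes the choice of coordinate permutation (including the order within each pair) irrelevant. Combining the reality test, the bound $k \le \lfloor (n-1)/2 \rfloor$, and membership of $\Phi_k(\sigma)$ in $T_k$ certifies realizability, and there are precisely $\lfloor (n+1)/2 \rfloor$ of the semi-algebraic sets $T_k$.

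The part I expect to require the most care is the middle step: confirming that the $S_{j,k}$ are genuinely polynomials with real coefficients, so that the $T_k$ are semi-algebraic in the exact sense of the definition, and pinning the count down to $\lfloor (n+1)/2 \rfloor$ rather than $\lfloor n/2 \rfloor + 1$, which is exactly where Perron--Frobenius enters. Once those points are settled, the remainder is a routine adaptation of the proof of Theorem \ref{thm:rniep_pol}.
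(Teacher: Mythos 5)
Your proposal follows essentially the same route as the paper: test reality, map $\sigma$ into $\mathbb{R}^n$ via $\Phi_k$, and realize each stratum as the projection of a polynomial system in $(x,A)$ using the modified symmetric functions $S_{j,k}$, invoking Theorem \ref{thm:projection}. You additionally make explicit two points the paper leaves implicit --- that the $S_{j,k}$ are genuinely real polynomials, and that Perron--Frobenius forces $k \le \lfloor (n-1)/2 \rfloor$, which is what pins the count to $\lfloor (n+1)/2 \rfloor$ rather than $\lfloor n/2 \rfloor + 1$ --- both of which strengthen rather than alter the argument.
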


\begin{proof}
    Let $\sigma \in \mathbb{C}^n$ such that $\sigma$ satisfies the reality condition, then we can write $\sigma$ as a list of $n-2k$ real numbers followed by $k$ conjugate pairs. Using $\Phi_k: U_k \rightarrow \mathbb{R}^n$ from definition \ref{def:conj_to_real_map} we can map $\Phi_k(\sigma) = \tau$. We can now proceed in a similar way to Theorem \ref{thm:rniep_pol} by constructing
    \[
    \{x \in \mathbb{R}^n : \exists A \in \mathbb{R}^{n^2} ~ E_1(A) = S_{1,k}(x), \dots, E_n(A) = S_{n,k}(x), A \geq 0 \}.
    \]
    which represents the set of all realizable spectra with $k$ conjugate pairs. Thus $\sigma$ is realizable if and only if $\sigma$ satisfies the reality condition and if $\tau$ is in the above set. 
\end{proof}

In general moving between the closure and the interior of a semi-algebraic set is not as simple as changing inequalities and strict inequalities, see \cite{bochnak2013} for examples. In the case of the semi-algebraic sets that make up the NIEP this is the case as is shown below.

\begin{thm} \label{thm:set_equal_closint}
    Let $\mathscr{R}_{n,k}$ be the semialgebraic set of realizable spectra with $k$ complex conjugates, then 
    \[
    \text{cl}\left(\text{int}\left(\mathscr{R}_{n,k}\right)\right) = \mathscr{R}_{n,k}.
    \]
\end{thm}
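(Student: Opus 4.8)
The plan is to split the asserted equality into the two inclusions $\mathrm{cl}(\mathrm{int}(\mathscr{R}_{n,k})) \subseteq \mathscr{R}_{n,k}$ and $\mathscr{R}_{n,k} \subseteq \mathrm{cl}(\mathrm{int}(\mathscr{R}_{n,k}))$. The first reduces to showing that $\mathscr{R}_{n,k}$ is \emph{closed}, and the second to showing that $\mathrm{int}(\mathscr{R}_{n,k})$ is \emph{dense} in $\mathscr{R}_{n,k}$. Throughout I identify $\mathscr{R}_{n,k}$ with the semialgebraic set $\{x \in \mathbb{R}^n : \exists A \geq 0,\ E_i(A) = S_{i,k}(x)\ \text{for } i=1,\dots,n\}$, which is the preimage $c_k^{-1}(\mathscr{C}_n)$ of the realizable-coefficient set under the polynomial "roots-to-coefficients" map $c_k\colon \mathbb{R}^n \to \mathbb{R}^n$, $x \mapsto (S_{1,k}(x),\dots,S_{n,k}(x))$, i.e.\ the elementary symmetric functions evaluated on the list $\Phi_k^{-1}(x)$ of $n-2k$ reals together with $k$ conjugate pairs. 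Implicit in all of this is the mild restriction $k \leq \lfloor (n-1)/2 \rfloor$ already present in the preceding theorem; otherwise $\mathscr{R}_{n,k}$ could be nonempty while having empty interior, making the statement fail.

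\noindent\textbf{Closedness.} First I would record the classical fact that every realizable spectrum is realized by a nonnegative matrix all of whose row sums equal its Perron value: for an irreducible realizer $A$ with positive right Perron eigenvector $v$ this is the diagonal conjugation by $\mathrm{diag}(v)$, and the general case follows by applying this to $A + \varepsilon J$ ($J$ the all-ones matrix) and letting $\varepsilon \to 0$. Consequently, if realizable polynomials $p_j \to p$, one may choose realizers $B_j$ with constant row sums $\rho(p_j) \to \rho(p)$, hence with all entries in a fixed compact interval; a subsequential limit $B \geq 0$ has characteristic polynomial $p$, so $\mathscr{C}_n$ is closed, and therefore so is $\mathscr{R}_{n,k} = c_k^{-1}(\mathscr{C}_n)$ since $c_k$ is continuous. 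This gives the first inclusion.

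\noindent\textbf{Density of the interior.} The engine here is the following: if a list $\Lambda$ is realized by a \emph{positive} matrix $A$ whose eigenvalues are all simple and consist of exactly $k$ conjugate pairs (and $n-2k$ reals), then $\Phi_k(\Lambda) \in \mathrm{int}(\mathscr{R}_{n,k})$. Indeed a whole neighborhood of $A$ consists of positive (hence realizable) matrices of the same eigenvalue type, and at $A$ the map sending a matrix to the $\mathbb{R}^n$-coordinates of its spectrum is a submersion — each simple real eigenvalue and each simple conjugate pair can be moved independently — so its image fills a neighborhood of $\Phi_k(\Lambda)$. It then remains to show such "nicely realized" spectra are dense in $\mathscr{R}_{n,k}$: given $x \in \mathscr{R}_{n,k}$ with $\Lambda = \Phi_k^{-1}(x)$ realized by $A \geq 0$, one applies the Brauer/Guo Perron-shift lemma (adding $t \geq 0$ to the Perron value preserves realizability and makes it strictly dominant, producing slack), realizes the shifted list by some $C \geq 0$, replaces $C$ by $C + \varepsilon J$ to obtain a positive realizer of a nearby list, and then perturbs inside the open cone of positive matrices to simplify all eigenvalues and to split the "fake" pairs of $x$ — the coordinates $j$ with $z_j = 0$, which force repeated real eigenvalues — into genuine simple conjugate pairs, landing at a spectrum with exactly $k$ pairs close to $x$.

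\noindent\textbf{Where the difficulty sits.} The delicate step is the last one: steering the perturbation so that the conjugate-pair count ends up \emph{exactly} $k$ — not more, not fewer — while the matrix stays positive and the perturbed spectrum stays realizable. A generic perturbation of a positive matrix preserves the pair count, so one must deliberately exploit that each fake pair supplies a repeated real eigenvalue (a semisimple real eigenvalue of even multiplicity $2m$ splits into $m$ conjugate pairs under an arbitrarily small perturbation in a suitable direction) together with the robustness of realizability coming from the Perron shift. An alternative, more structural route to the same conclusion is to verify that $c_k$ is an open map onto its image, the set of monic degree-$n$ real polynomials with at least $n-2k$ real roots; then $\mathrm{int}(\cdot)$ and $\mathrm{cl}(\cdot)$ commute with $c_k^{-1}$, and the theorem reduces to showing that $\mathscr{C}_n$ intersected with that image equals the closure of its relative interior there, which is again proved using positive realizers. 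Either way, the technical crux is the interaction between the conjugate-pair stratification of $\mathbb{R}^n$ and realizability.
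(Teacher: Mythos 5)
Your proposal follows the same two-step strategy as the paper's own proof --- closedness of $\mathscr{R}_{n,k}$ for the inclusion $\mathrm{cl}(\mathrm{int}(\mathscr{R}_{n,k})) \subseteq \mathscr{R}_{n,k}$, and perturbation of a realizer to a positive matrix for the reverse inclusion --- but you have correctly put your finger on the point where the paper's argument is too quick. The paper passes from ``$A + \mathbbm{1}_{1/s}$ is positive'' directly to ``its spectrum lies in $\mathrm{int}(\mathscr{R}_{n,k})$,'' and that implication is not automatic: the coefficient map $A \mapsto (E_1(A),\dots,E_n(A))$ is a submersion only at nonderogatory matrices, so positivity of the realizer alone does not make its spectrum an interior point; moreover the specific perturbation $A + \mathbbm{1}_{1/s}$ may not produce a spectrum with $k$ conjugate pairs at all (a fake pair with $z_j = 0$ can split into two distinct reals, or a genuine pair with small imaginary part can collapse onto the real axis), in which case the perturbed spectrum leaves the stratum $\mathscr{R}_{n,k}$ entirely rather than landing in its interior. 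Your interiority criterion --- a positive realizer with simple eigenvalues and exactly $k$ genuine pairs yields an interior point because the spectrum map is then a submersion --- is the right fix, and your closedness argument via row-sum-normalized realizers and compactness is an actual proof of a fact the paper merely asserts.

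That said, your write-up stops short at exactly the step you yourself flag as delicate. The claim that one can perturb within the positive cone so as to simplify all eigenvalues and split each fake pair into a genuine conjugate pair, while preserving realizability, keeping the total pair count equal to $k$, and staying close to $x$, is only sketched: the Brauer/Guo shift supplies slack in the Perron direction, but the simultaneous control of the splitting directions for repeated real eigenvalues and the stability of the existing genuine pairs still needs a concrete construction (or the alternative openness-of-$c_k$ route you mention needs to be carried out). So your proposal is a more honest and more nearly complete account of the theorem than the paper's proof, but as written it remains an outline with one unproved lemma rather than a finished argument.
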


\begin{proof}
    First note that the set of realizable spectra is closed, this gives that $\text{cl}\left(\text{int}\left(\mathscr{R}_{n,k}\right)\right) \subseteq \mathscr{R}_{n,k}$. 
    
    Let $\sigma$ be a realizable spectra on the boundary of $\mathscr{R}_{n,k}$ with associated matrix $A \in \mathsf{M}^{\geq 0}_n$ and let $\mathbbm{1}_s \in \mathsf{M}_n$ be the all ones matrix times some number $s$. Now consider the sequence of strictly positive matrices $\{A+\mathbbm{1}_{1/s}\}_{s=1}^\infty$. This sequence converges to the matrix $A$, but at every point in the sequence we are in the interior since $A + \mathbbm{1}_{1/s}$ is positive for every $s \in \mathbb{N}^+$. By the continuity of the spectra of a matrix the sequence of spectras associated with $\{A+\mathbbm{1}_{1/s}\}_{s=1}^\infty$ will converge to $\sigma$. Thus the result follows from $\sigma$ being an arbitrary boundary point of $\mathscr{R}_{n,k}$.
\end{proof}

\section{Matrices that lie on the boundary}

An important related problem to solving the NIEP is to get a set of matrices whose spectra lie on the boundary of the NIEP. In a certain view these matrices form the ``spectrally least nonnegative" matrices. Using semi-algebraic projections we can find these boundary matrices by taking the solution of the NIEP and placing that back in the embedded semi-algebraic set. Then we project off the spectral values to get the matrices that lie on the boundary.

\begin{ex}
    Consider $\sigma = (1,-1)$ and a generic 
    \[
    A = \begin{bmatrix}
        a & b \\ c & d
    \end{bmatrix}.
    \]
    From the construction of the embedded semi-algebraic set for the RNIEP of $n=2$ we have that 
    \begin{align*}
        a + d &= 0 \\
        ad - bc &= -1 \\
        a,b,c,d &\geq 0
    \end{align*}
    which implies $a = d = 0$ since we are dealing with nonnegative matrices. Thus $bc = 1$, giving that the set of matrices with the given spectra are 
    \[
    \begin{bmatrix}
        0 & b \\ 1/b & 0
    \end{bmatrix}
    \]
    where $b > 0$.
\end{ex}

This process can also provide a reliable method for finding realizable spectra. Given a spectra $\sigma$ with $k$ conjugate pairs we can apply the quantifier elimination process to determine if the following statement is true 
\[
\exists A \in \mathbb{R}^{n^2} ~ E_1(A) = S_{1,k}(\sigma), \dots, E_n(A) = S_{n,k}(\sigma), A \geq 0.
\]

As we will mention below the general algorithm for quantifier elimination is computationally infeasible. But the process of checking if there exists any matrix that satisfies the above equation can be done much faster, though still in exponential time. This leaves the problem tractable for low dimensional cases, see \cite{basu2003} for more information.

\section{Concluding remarks}

In our view knowing the existence of polynomial inequalities is an exciting result. A further aspect of real algebraic geometry is cylindrical algebraic decomposition (CAD) or Collins algorithm, see \cite{collins1975} for the original algorithm and \cite{basu2003} for a detailed modern refinement of algorithms in real algebraic geometry. CAD gives an algorithm to directly compute the polynomials through projections. This algorithm gives a method for solving the NIEP for any value of $n$. In practice CAD cannot be used directly for larger problems. The algorithm complexity is doubly exponential both with respect to the number of polynomials before the projection and the max degrees of those polynomials. 

The set of realizable spectra are closed when working nonnegative matrices and open when working with positive matrices. With this we can restrict ourselves to either basic closed or basic open semi-algebraic sets depending on whether positive or nonnegative matrices are being considered. 

One problem with the semi-algebraic approach is bounding the number of polynomial inequalities needed. The bound for the number of polynomial inequalities that can be added with projection is doubly exponential \cite{basu2003}. This leads to an incalculable worst case when we project off the needed $n^2$ variables. To get a handle on the number of polynomials needed we can use Theorems \ref{thm:open_max_inequal} and \ref{thm:closed_max_inequal}. To use these we need to bound the number of basic semi-algebraic sets that make up the space. With this we get that the number of polynomial inequalities that make up $\mathscr{R}_{n,k}$ is bounded by $n(n+1)/2$ times the number of basic closed semi-algebraic sets. Using Theorem \ref{thm:set_equal_closint} we can actually use Theorem \ref{thm:open_max_inequal} on $\mathscr{R}_{n,k}$ to bring the bound down to $n$ times the number of basic closed semi-algebraic sets. Similarly if we restrict to positive matrices the bound becomes $n$ times the number of basic open semi-algebraic sets.

The SNIEP and RNIEP form connected semi-algebraic sets, but connected does not necessarily imply basic. Based on the symmetries in the cone we are projecting we conjecture the following.

\begin{conj} \label{conj:one_real_basic}
    The the set of real realizable spectra for nonnegative matrices forms a basic semi-algebraic set. 
\end{conj}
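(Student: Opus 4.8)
We conclude by outlining a possible approach to Conjecture~\ref{conj:one_real_basic}.

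By Theorem~\ref{thm:rniep_pol} the set $\mathscr{R}_n^{\text{real}} \subseteq \mathbb{R}^n$ is semi-algebraic; it is closed (as in the proof of Theorem~\ref{thm:set_equal_closint}), it is a cone, since $tA \geq 0$ realizes $t\sigma$ whenever $A \geq 0$ realizes $\sigma$ and $t \geq 0$, and it is invariant under the $S_n$-action permuting coordinates. Two reductions are available. First, it suffices to show that the set $\mathscr{C}_n^{\text{real}}$ of realizable coefficient tuples of polynomials with all real roots is basic closed, since $\mathscr{R}_n^{\text{real}}$ is the preimage of $\mathscr{C}_n^{\text{real}}$ under the polynomial map sending a tuple to the coefficients of the associated monic polynomial, and preimages of basic closed sets under polynomial maps are basic closed. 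Second, the key structural feature is a monotonicity property: by Brauer's rank-one perturbation theorem applied to a Perron eigenvalue (with a limiting argument when the realizing matrix is reducible), if $(\lambda_1, \dots, \lambda_n) \in \mathscr{R}_n^{\text{real}}$ with $\lambda_1 = \max_j \lambda_j$ then $(\lambda_1 + t, \lambda_2, \dots, \lambda_n) \in \mathscr{R}_n^{\text{real}}$ for all $t \geq 0$. Hence inside the closed chamber $\Delta = \{\lambda_1 \geq \dots \geq \lambda_n\}$ the slice of $\mathscr{R}_n^{\text{real}}$ is, fiberwise over $(\lambda_2, \dots, \lambda_n)$, a closed up-ray $[\mu(\lambda_2, \dots, \lambda_n), \infty)$; the envelope $\mu$ is finite everywhere — for any real tuple, making the dominant eigenvalue large and realizing it by a direct sum of a Suleimanova-type companion matrix (carrying that eigenvalue together with the nonpositive $\lambda_j$) and a diagonal matrix (carrying the positive $\lambda_j$) — and, being the lower envelope of a closed semi-algebraic set, $\mu$ is continuous and semi-algebraic.

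So $\mathscr{R}_n^{\text{real}} \cap \Delta$ is exactly the epigraph $\{(\lambda_1, \dots, \lambda_n) \in \Delta : \lambda_1 \geq \mu(\lambda_2, \dots, \lambda_n)\}$. The plan is: (i) choose a polynomial $P(\lambda_2, \dots, \lambda_n, t)$, monic in $t$, vanishing on the graph of $\mu$, with $\mu(u)$ the largest real root of $P(u, \cdot)$; (ii) translate ``$\lambda_1 \geq$ the largest real root of $P(u, \cdot)$'' into a finite conjunction of polynomial inequalities in $(\lambda_1, u)$, a univariate root-location problem approachable via Sturm/subresultant sequences or via the bounded-degree representation of polynomials that are nonnegative on a half-line; (iii) express the resulting conditions through $S_1, \dots, S_n$, so that they become symmetric, hence — since $\mathscr{R}_n^{\text{real}}$ is $S_n$-invariant and $\Delta$ is a fundamental domain — cut out $\mathscr{R}_n^{\text{real}}$ on all of $\mathbb{R}^n$; equivalently, this is the assertion that $\mathscr{C}_n^{\text{real}}$ is basic closed inside the hyperbolicity region. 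For $n \leq 4$ this can be carried out using the known solution of the RNIEP, and for $n = 2$ it reproduces $\mathscr{R}_2^{\text{real}} = \{\sigma \in \mathbb{R}^2 : S_1(\sigma) \geq 0\}$; these serve as consistency checks.

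The main obstacle is step~(ii) — and, upstream of it, the claim in~(i) that $\mu$ is a single, extreme branch of $P$. Sturm-type descriptions of root-location conditions are in general Boolean combinations that involve disjunctions, i.e., finite unions, and collapsing them to a single conjunction of inequalities is precisely the content to be proved. I expect this to require genuine global information about the RNIEP region: that it is contractible (being a cone), that its interior is connected with connected complement inside $\Delta$, and that it is ``generically basic'', from which one would argue that the envelope is governed by one branch of $P$ with a controlled sign pattern. Establishing such facts seems to demand a description of the boundary $\partial \mathscr{R}_n^{\text{real}}$ — essentially the RNIEP itself, open for $n \geq 5$. An alternative, more intrinsic route bypasses explicit polynomials: by the Br\"{o}cker--Scheiderer fan criterion, the closed semi-algebraic set $\mathscr{R}_n^{\text{real}}$ is basic closed if and only if it admits no obstruction fan in the real spectrum of $\mathbb{R}[X_1, \dots, X_n]$, and the cone structure together with the Perron monotonicity above ought to prevent the boundary from wrapping in the way an obstruction fan requires. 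Making that precise is again the hard step, and is where progress on the conjecture would most plausibly be concentrated.
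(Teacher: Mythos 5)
The statement you are addressing is stated in the paper only as a conjecture; the paper offers no proof of it, so there is nothing to compare your argument against except the conjecture itself. Your text is, by its own admission, a program rather than a proof, and it does not close the gap. The structural observations you collect are sound and useful: $\mathscr{R}_n^{\text{real}}$ is closed, is a cone, is $S_n$-invariant; the Brauer rank-one perturbation argument (perturbing along a Perron eigenvector of an irreducible realizing matrix, then passing to the reducible case by closedness) does give the up-ray property in the dominant coordinate; and the Suleimanova-plus-diagonal construction does show the envelope $\mu$ is finite. But the conjecture is precisely the assertion that the resulting epigraph can be cut out by a single \emph{conjunction} of polynomial inequalities, and that is exactly what steps (i) and (ii) of your plan leave unestablished. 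Sturm and subresultant descriptions of ``$\lambda_1$ exceeds the largest real root of $P(u,\cdot)$'' are genuinely disjunctive in general, and basicness of a closed semi-algebraic set is a global obstruction-theoretic property (the Br\"{o}cker--Scheiderer fan criterion you cite) that is not implied by contractibility, connectedness, or being a cone; there are connected, even convex-looking, closed semi-algebraic sets that are not basic. Your own closing paragraph concedes that making the fan argument precise ``is again the hard step,'' which is to say the conjecture remains open after your argument.

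Two smaller points. First, the existence of a polynomial $P(\lambda_2,\dots,\lambda_n,t)$, monic in $t$, whose largest real root is exactly $\mu$ is itself not automatic: semi-algebraicity of $\mu$ gives a polynomial vanishing on its graph, but controlling \emph{which} branch realizes $\mu$, and ensuring $\mu$ is the \emph{largest} root rather than an intermediate one, requires boundary information about $\mathscr{R}_n^{\text{real}}$ that is essentially the open RNIEP for $n \geq 5$, as you note. Second, closedness of the epigraph only forces $\mu$ to be lower semicontinuous, not continuous; the continuity claim needs a separate argument (or can simply be dropped, since it is not load-bearing). In summary: the reductions and the monotonicity lemma are correct and would be worth recording as partial progress toward the conjecture, but the proposal does not prove the statement, and the decisive content --- eliminating the disjunctions, or ruling out an obstruction fan --- is still missing.
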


\begin{conj}
    The the set of realizable spectra for symmetric nonnegative matrices forms a basic semi-algebraic set.
\end{conj}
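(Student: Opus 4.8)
The plan is to strengthen the corollary already recorded above --- that $\mathscr{R}_n^{\text{sym}}$ is semi-algebraic --- to the assertion that it is a \emph{basic} closed semi-algebraic set, by first pinning down its topology and then verifying a fan criterion for basicness. Two preliminaries, both obtained exactly as in Section~3, come first. (i) $\mathscr{R}_n^{\text{sym}}$ is closed: if $\lambda^{(j)}\to\lambda$ and each $\lambda^{(j)}$ is the spectrum of a symmetric $A_j\ge 0$, then $\|A_j\|_2=\rho(A_j)=\max_i|\lambda^{(j)}_i|$ stays bounded, hence so do the entries of $A_j$, and a convergent subsequence yields a symmetric $A\ge 0$ with spectrum $\lambda$. (ii) $\mathscr{R}_n^{\text{sym}}=\text{cl}\big(\text{int}(\mathscr{R}_n^{\text{sym}})\big)$, by the argument of Theorem~\ref{thm:set_equal_closint}: the perturbation $A\mapsto A+\mathbbm{1}_{1/s}$ keeps $A$ symmetric, and the spectrum of a symmetric positive matrix with distinct eigenvalues is interior because there the eigenvalue map is a submersion onto $\mathbb{R}^n$. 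So $\mathscr{R}_n^{\text{sym}}$ is full-dimensional, closed, connected, and the closure of its interior; since that alone does not force basicness, the real work is to exploit the geometry of the matrix cone being projected.

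The structure I would use is the following. Let $K$ be the cone of symmetric nonnegative matrices; it is convex, invariant under positive scaling, and invariant under conjugation by permutation matrices. This makes $\mathscr{R}_n^{\text{sym}}$ a closed cone, and it is monotone in the Perron root: if $A\ge 0$ is symmetric with nonnegative unit Perron eigenvector $v$, then $A+t\,vv^{\mathsf T}$ is again symmetric and nonnegative and has the same spectrum as $A$ except that the largest eigenvalue is raised by $t$. Hence, inside the region $\lambda_1\ge\lambda_2\ge\cdots\ge\lambda_n$, the condition $\lambda\in\mathscr{R}_n^{\text{sym}}$ is equivalent to $\lambda_1\ge c(\lambda_2,\dots,\lambda_n)$ for a single semi-algebraic ``minimal Perron root'' function $c$, and the whole set is the $S_n$-orbit of this region, the walls $\lambda_i=\lambda_j$ being cut out by polynomials. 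It then suffices to show that the graph of $c$ lies in an algebraic hypersurface on which a single sign condition, together with the obvious necessary inequalities (trace, Perron, JLL), isolates $\mathscr{R}_n^{\text{sym}}$; here the boundary matrices of Section~4 --- the ``spectrally least'' nonnegative matrices, which are reducible or carry a forced zero pattern, and for which Section~4 gives explicit equations by projecting off the eigenvalues --- are exactly what parametrizes $\partial\mathscr{R}_n^{\text{sym}}$. Feeding the resulting description into Theorem~\ref{thm:closed_max_inequal}, or via (ii) and Theorem~\ref{thm:open_max_inequal} into the positive-matrix version, would moreover collapse the bounds of the concluding remarks to $n(n+1)/2$ (resp.\ $n$) inequalities.

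The hard part, and where I expect the argument to stall in general, is this last step: the fan criterion (see \cite{bochnak2013}) demands that $\partial\mathscr{R}_n^{\text{sym}}$ have no two components meeting only along a lower-dimensional set and no codimension-one stratum with $\mathscr{R}_n^{\text{sym}}$ locally on both sides, and controlling this is tantamount to understanding the boundary hypersurface of the SNIEP --- its irreducibility, or at least its real components --- and the function $c$. For $n\le 4$ one can check this against the known solutions of the SNIEP, and the monotonicity and $S_n$-symmetry above already kill the crudest obstruction since $\mathscr{R}_n^{\text{sym}}$ is connected and conic; but for $n\ge 5$ the SNIEP itself is open, so in honest terms the proposal reduces the conjecture to the boundary-matrix program of Section~4: pin down the hypersurface supporting $\partial\mathscr{R}_n^{\text{sym}}$ and show that a single sign condition on it, plus trace/Perron/JLL, recovers $\mathscr{R}_n^{\text{sym}}$. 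The same reduction applies, mutatis mutandis, to Conjecture~\ref{conj:one_real_basic}.
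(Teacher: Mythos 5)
The statement you are trying to prove is stated in the paper as a conjecture, with no proof offered --- the authors only gesture at ``symmetries in the cone we are projecting'' as motivation --- so there is no proof of record to compare against, and your proposal must stand on its own. It does not close the gap, and you say as much yourself. The preliminary observations are correct and worth recording: closedness of $\mathscr{R}_n^{\text{sym}}$ via the bound $\|A_j\|_2=\rho(A_j)$, the $\text{cl}(\text{int})$ property via the symmetric perturbation $A+\mathbbm{1}_{1/s}$ (and for symmetric matrices the interior claim is even cleaner than your submersion argument, since $Q(\Lambda+D)Q^{\mathsf T}$ realizes every nearby spectrum without any distinct-eigenvalue hypothesis), the conic and $S_n$-invariant structure, and the rank-one Perron update $A\mapsto A+t\,vv^{\mathsf T}$ giving the epigraph description $\lambda_1\ge c(\lambda_2,\dots,\lambda_n)$ on the ordered chamber. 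But none of these properties --- closed, connected, conic, full-dimensional, closure of its interior, permutation-invariant, epigraph of a semi-algebraic function --- implies \emph{basic}, and the step where basicness would actually be established (verifying the fan/Br\"ocker criterion, i.e.\ ruling out a codimension-one stratum of $\partial\mathscr{R}_n^{\text{sym}}$ with the set locally on both sides) is stated as a goal rather than carried out. Since that verification requires global control of the Zariski closure of the boundary, which is exactly what is missing for $n\ge 5$ where the SNIEP is open, the proposal is a reduction of the conjecture to an equivalent open problem, not a proof.

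A second, more concrete gap: even granting the epigraph description on the chamber $\lambda_1\ge\cdots\ge\lambda_n$, the full set is the union of the $S_n$-translates of that piece, and a finite union of basic semi-algebraic sets is in general only semi-algebraic, not basic. The walls $\lambda_i=\lambda_j$ are precisely where the crossing obstruction to basicness can appear, so saying the walls are ``cut out by polynomials'' does not dispose of the issue; you would need either to verify the criterion across the walls or to rewrite the description in permutation-invariant coordinates (e.g.\ via the elementary symmetric functions $S_1,\dots,S_n$, which is closer to how the paper builds $\mathscr{C}_n$) before basicness of a single-chamber description could transfer to the whole set.
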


If the above conjectures are proven, then by Theorem \ref{thm:open_max_inequal} the number of polynomial inequalities needed is less than or equal to $n$. Conjecture \ref{conj:one_real_basic} would also point towards the following conjecture.

\begin{conj}
    The set of realizable spectra for nonnegative matrices is solvable with the reality condition and $\lfloor (n+1)/ 2 \rfloor$ unions of $n$ polynomial inequalities. 
\end{conj}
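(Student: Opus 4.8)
The plan is to obtain the conjecture from two ingredients together with the reduction already established in this section. Recall that reduction: a list $\sigma \in \mathbb{C}^n$ is realizable if and only if it satisfies the reality condition and, for the unique $k$ equal to its number of conjugate pairs, the transported point $\Phi_k(\sigma)$ lies in $\mathscr{R}_{n,k}$ (viewed inside $\mathbb{R}^n$ via Definition~\ref{def:conj_to_real_map}); and since Perron--Frobenius forces a nonnegative matrix to have at least one real eigenvalue, $k$ runs over $\{0,1,\dots,\lfloor(n-1)/2\rfloor\}$, a set of size $\lfloor(n+1)/2\rfloor$. So it suffices to show each $\mathscr{R}_{n,k}$ is cut out by $n$ polynomial inequalities. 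The two ingredients are then: (A) each $\mathscr{R}_{n,k}$ is a basic semi-algebraic set --- the natural extension of Conjecture~\ref{conj:one_real_basic} from $k=0$ to all relevant $k$; and (B) such a set, being in addition the closure of its interior by Theorem~\ref{thm:set_equal_closint}, can be defined by only $n$ polynomials.

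Ingredient (B) is meant to be the argument already sketched in the concluding remarks: if $\mathscr{R}_{n,k}$ is basic, then its interior is a basic open semi-algebraic subset of the ambient algebraic set $\mathbb{R}^n$ of dimension $d=n$, so Theorem~\ref{thm:open_max_inequal} yields $f_1,\dots,f_n \in \mathbb{R}[X_1,\dots,X_n]$ with $\text{int}(\mathscr{R}_{n,k}) = \{f_1>0,\dots,f_n>0\}$, and then $\mathscr{R}_{n,k} = \text{cl}(\{f_1>0,\dots,f_n>0\})$ by Theorem~\ref{thm:set_equal_closint}. Making this rigorous requires two small checks that are not automatic in general: that the interior of the basic set $\mathscr{R}_{n,k}$ is genuinely basic open, and that for a suitable choice of the $f_i$ the closure of the open cell coincides with the non-strict locus $\{f_1\ge 0,\dots,f_n\ge 0\}$ (this can fail for arbitrary polynomials, e.g. $f = x^2(x-1)$). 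I would handle both using that $\mathscr{R}_{n,k}$ has pure dimension $n$ and that near its boundary the trace, Perron, and JLL inequalities together with the explicit boundary matrices of Section~4 provide honest local defining functions; failing that, one can still record the certificate as ``the closure of a conjunction of $n$ strict inequalities,'' which is of the advertised size.

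The main obstacle is ingredient (A). The set $\mathscr{R}_{n,k}$ is connected --- the perturbation $A \mapsto A + \mathbbm{1}_{1/s}$ used in Theorem~\ref{thm:set_equal_closint} moves any point into the interior along a path --- but connectedness does not imply basicness, so a new input is required. I would approach (A) via the Br\"ocker--Scheiderer fan criterion, which characterizes basic closed semi-algebraic sets by a local condition on fans of the real spectrum that need only be verified along the at most two-dimensional strata of the boundary. The boundary of $\mathscr{R}_{n,k}$ is, in the coordinates $E_j = S_{j,k}$, the image of those nonnegative matrices in which the constraints $A\ge 0$ force some entry to vanish --- precisely the ``spectrally least'' matrices of Section~4 --- so the job becomes a stratum-by-stratum study of this image. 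The symmetries of the cone being projected (arbitrary permutations of the $n-2k$ real eigenvalues and of the $k$ conjugate pairs, plus the weighted homogeneity of the $S_{j,k}$) should make the local models at boundary points uniform, which is the sense in which those symmetries motivate the conjecture; converting this into an actual verification of the fan criterion, and in particular handling the strata where several of the $f_i$ vanish at once, is where I expect essentially all the work to be. I would test the approach first on $k=0$, where $\mathscr{R}_{n,0}=\mathscr{R}_n^{\text{real}}$ and (A) is exactly Conjecture~\ref{conj:one_real_basic}, and on $n\le 4$ using Meehan's description of the $n=4$ boundary, before attempting the general case.
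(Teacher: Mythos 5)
The statement you are addressing is left as a conjecture in the paper; no proof is given there, so there is nothing to compare your argument against except the sketch in the concluding remarks. Measured against that sketch, your proposal is a faithful and in places more careful reconstruction of the intended route: split by the number $k$ of genuine conjugate pairs (your Perron--Frobenius argument that $k \le \lfloor (n-1)/2 \rfloor$, giving $\lfloor (n+1)/2 \rfloor$ cases, is a detail the paper never spells out), transport each case to $\mathbb{R}^n$ via $\Phi_k$, and then invoke Theorem \ref{thm:open_max_inequal} together with Theorem \ref{thm:set_equal_closint} to get $n$ inequalities per case. You also correctly flag the two points the paper glosses over in its concluding remarks: that the interior of a basic closed semi-algebraic set need not be basic open, and that $\mathrm{cl}\left(\{f_1>0,\dots,f_n>0\}\right)$ need not equal $\{f_1\ge 0,\dots,f_n\ge 0\}$, so ``$n$ polynomial inequalities'' has to be interpreted with some care.

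However, what you have written is not a proof, and you say as much: ingredient (A), the basicness of each $\mathscr{R}_{n,k}$, is exactly the open content of the conjecture. For $k=0$ it is Conjecture \ref{conj:one_real_basic}, which the paper also does not prove, and for $k>0$ you are asserting something strictly stronger than anything conjectured in the paper. The Br\"ocker--Scheiderer fan criterion is a sensible tool to reach for, but nothing in your proposal verifies it on any stratum of the boundary, and the ``symmetries of the cone'' heuristic is the same motivation the paper itself offers, not an argument. Likewise your fallback for (B) --- recording the certificate as the closure of $n$ strict inequalities --- changes the statement being certified. So the proposal should be read as a correct identification of what would need to be established, together with a reasonable research plan, rather than as a proof of the conjecture; no genuine step in it closes the gap that makes the statement a conjecture in the first place.
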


\bibliographystyle{abbrv}
\bibliography{Papers/polynomial_inequalities/polynomial_inequalities_solve_niep}

\end{document}